
\documentclass{birkjour}

\usepackage[cp850]{inputenc}
\usepackage{amssymb}
\usepackage[mathscr]{eucal}
\usepackage{amscd}
\usepackage{amsmath}
\usepackage[all]{xy}
\usepackage{amscd}
\usepackage{mathtools}

\newcommand{\R}{\mathbb R}
\newcommand{\e}{\varepsilon}
\newcommand{\N}{\mathbb N}

 \def\Ext{\operatorname{Ext}}

\def\dens{\operatorname{dens}}
\def\cop{\operatorname{co}^{(p)}}
\def\supp{\operatorname{supp}}

\theoremstyle{plain}
\newtheorem{theorem}{Theorem}[section]
\newtheorem{problem}{Problem}
\newtheorem{prop}[theorem]{Proposition}

\newtheorem{corollary}[theorem]{Corollary}

\theoremstyle{remark}

\newtheorem{defn}[theorem]{Definition}

\newcommand{\adef}{\begin{defn}}
\newcommand{\zdef}{\end{defn}}

\title{Trimming the Johnson bonsai}

\author[Cabello]{F\'elix Cabello S\'anchez}
\address{Universidad de Extremadura\\
Instituto de Matem\'aticas IMUEx\\
06006 Badajoz, Spain.}
\email{fcabello@unex.es}

\author[Castillo]{Jes\'us M.\,F. Castillo}
\address{Universidad de Extremadura\\
Instituto de Matem\'aticas IMUEx\\
06006 Badajoz, Spain.}
\email{castillo@unex.es}

\author[Moreno]{Yolanda Moreno}
\address{Universidad de Extremadura\\
Instituto de Matem\'aticas IMUEx\\
07001 C\'aceres, Spain.}
\email{ymoreno@unex.es}

\thanks{This research has been supported in part by projects PID2019-103961GB-C21 and PID2023-146505NB-C21 funded by MINCIN}

\begin{document}
\noindent {\footnotesize To appear in Banach Journal of Mathematical Analysis}\vspace{30pt}

\dedicatory{To Bill Johnson, whose work, no matter if we talk of his papers, his live conversation or his relentless activity at MathOverflow, is the finest exemplification of the verses of Joan Walsh Anglund:
A bird does not sing because he has an answer / He sings because he has a song.}

\begin{abstract} We show that  if $p>1$ every subspace of $\ell_p(\Gamma)$ is an $\ell_p$-sum of separable subspaces of $\ell_p$, and we provide examples of  subspaces of $\ell_p(\Gamma)$  for $0<p\leq 1$ that are not even isomorphic to any $\ell_p$-sum of separable spaces, notably the kernel of any quotient map $\ell_p(\Gamma)\to L_1(2^{\Gamma})$ with $\Gamma$ uncountable. We involve the separable complementation property (SCP) and the separable extension property (SEP), showing that if $X$ is a Banach space  of density character $\aleph_1$ with the SCP then the kernel of any quotient map $\ell_p(\Gamma)\to X$  is a complemented subspace of a space with the SCP and, consequently, has the SEP.\end{abstract}

\maketitle


\section{Introduction}

This paper stems from different lines of research that somehow crossed after writing \cite[p. 14 and Theorem 7.2.3]{hmbst}:\medskip

\textbf{First line of research.} Our first interest is a theorem of  K\"othe that states that every complemented subspace of $\ell_1(\Gamma)$ is isomorphic to $\ell_1(I)$ for some $I\subset \Gamma$. \medskip

This result is well-known, even if well-known is a way of saying because, while the proof for $\Gamma$ countable is perfectly standard and can be seen in \cite{lindtzaf}, K\"othe's proof for $\Gamma$ uncountable \cite{kothe} is not easily accessible, not to mention that it is written in German. Other proofs for the uncountable case have been obtained by Orty\'nski \cite{orty}, Rosenthal \cite[Corollary on p.~29]{ros}, Rodr\'iguez Salinas \cite{salinas} and Finol and Wojtowicz \cite{finowoj}. The result above, in combination with the fact that each Banach space is a quotient of some $\ell_1(\Gamma)$,
 yields  that the spaces $\ell_1(I)$ are the only projective objects in the category of Banach spaces.\medskip

\noindent Orty\'nsky moves deeper in this last direction and shows in \cite[Theorem 1]{orty}\medskip

$(\bigstar)$\quad Given $0<p\leq1$, every complemented subspace of $\ell_p(\Gamma)$ is isomorphic $\ell_p(I)$ for some $I\subset \Gamma$, \medskip

\noindent which yields  that $\ell_p(I)$ are the only projective spaces in the category of $p$-Banach spaces. This result together with the Aoki--Rolewicz theorem (every quasinormed space can be given an equivalent $p$-norm for suitable $p\in (0,1]$) plus
the fact that every $p$-Banach space is a quotient of some $\ell_q(\Gamma)$ for each $0<q\leq p$ shows that every projective quasi-Banach space is finite-dimensional.

Orty\'nski's remarkable extension of K\"othe's result 
 occurs even if what is true for $p=1$ (every closed subspace of $\ell_1$ contains a subspace isomorphic to $\ell_1$ and complemented in $\ell_1$) is false for $p<1$ (see \cite{arica}). Indeed, he shows \cite[Proposition 2]{orty} that for uncountable $\Gamma$ every closed subspace of $\ell_p(\Gamma)$ with density character $\aleph<|\Gamma|$ contains a subspace isomorphic to $\ell_p(\aleph)$ and complemented in $\ell_p(\Gamma)$.

\medskip

\textbf{Second line of research.} The proof of Moreno and Plichko that $c_0(\Gamma)$ is ``automorphic'' for every $\Gamma$ (see \cite{moreplic} or  \cite[Theorem 7.2.3]{hmbst}) is based on a result of Johnson--Zippin that states that every subspace of $c_0(\Gamma)$ admits a decomposition into a $c_0$-sum of separable subspaces; see \cite[Lemma 2]{jz} or Proposition~\ref{prop} below for a more precise statement. (A Banach space $X$ is automorphic if every isomorphism between subspaces $A,B$ of $X$ such that $X/A$ and $X/B$ are infinite dimensional and have the same density character can be extended to an automorphism of $X$. The proof that $c_0$ automorphic is the seminal result of Lindenstrauss and Rosenthal \cite{lindrose} and is deeply rooted in Banach space theory, but once one knows that, the proof that $c_0(\Gamma)$ is automorphic for every $\Gamma$ is mostly combinatorial.)\medskip

At least half of the present authors were firmly convinced for some time that a similar decomposition would be true for subspaces of $\ell_p(\Gamma)$ for $0<p<\infty$, which would have led to unifying proofs for the results of K\"othe, Orty\'nski and  Moreno--Plichko, among others.

The harsh reality showed us that our intentions were doomed to failure from the outset. Indeed, while in Section~2 we prove that:
\medskip

$\bullet$ If $p>1$ every closed subspace of $\ell_p(\Gamma)$ is an $\ell_p$-sum of subspaces of $\ell_p$\medskip

\noindent (due to Johnson and Zippin, see their Remark~3),
in Section 3 we show that the ``result'' is strongly false in general:\medskip

$\bullet$ For $0<p\leq 1$ there are closed subspaces of $\ell_p(\Gamma)$ not even isomorphic to any $\ell_p$-sum of separable spaces.\medskip

Our examples are kernels of surjections $\ell_p(\Gamma)\to L_p(\mu)$, where $\mu$ is a ``nonseparable'' probability.\bigskip

\textbf{Third line of research.}
Since we cannot change our spots,
once established that $\ell_p(\Gamma)$ contains weird subspaces for $0<p\leq 1$,
we focus on the problem of relating the properties of the subspaces of $\ell_p(\Gamma)$ to those of the corresponding quotients. A saucerful of highly non-trivial results on that topic is available in \cite[Sections 5.3 and 10.4]{hmbst}.

An important property when nonseparable spaces are to be considered is the separable complementation property (SCP), see \cite[Section 4]{pliyo}: \adef A Banach space has the separable complementation property if every separable subspace is contained in a separable complemented subspace.\zdef

Probably the seminal result is that all reflexive spaces have the SCP, a result wrongly attributed in \cite{casa} to Lindenstrauss' memoir instead of to \cite{lindreflexive}, and that has been later extended to WCG or WLD spaces \cite[Chapter VI]{DGZ}. A comprehensive approach to the SCP, including many new results, can be followed through the papers \cite{pliyo, PY, fjp}. Perfectly aware that no one seems to know whether the SCP is inherited by complemented subspaces (cf. \cite[Problem 6.4]{fjp}), Figiel, Johnson and Pe\l czy\'nski introduce the following, more affordable version (see their Remark 3.3):

\adef A Banach space $X$ has the separable extension property (SEP) if given a separable subspace $E\subset X$ there is an operator $T: X\to X$ with separable range that fixes $E$ (meaning that $Tx=x$ for all $x\in E$).  \zdef

However, the SEP has only a minor role in \cite{fjp} and its connection with the SCP remains unexplored, being only obvious that complemented subspaces of spaces with the SCP have the SEP. In this way, the main result we will obtain in Section \ref{jump}, namely, that for $0<p\leq 1$
the kernel of any quotient map $Q:\ell_p(\Gamma) \to X$ is a complemented subspace of a space with the SCP when $X$ is a $p$-Banach space with the SCP and density character $\aleph_1$ can be reformulated as:\medskip

$\bullet$ For $0<p\leq 1$ the kernel of any quotient map $Q:\ell_p(\aleph_1) \to X$ has the SEP if $X$ is a $p$-Banach space with the SCP.\medskip

\noindent \emph{Notational conventions.} We consider only real scalars. If $S$ is a subset of a linear space $X$, then $\operatorname{span}(S)$ denotes the linear subspace generated by $S$ in $X$. If $X$ carries a topology, then $\operatorname{\overline{span}}(S)$ is the closure of $\operatorname{span}(S)$. Given a (nonempty) set $I$ and $0<p<\infty$ we write $\ell_p(I)$ for the space of those functions $x:I\to\R$ such that
$\|x\|_p =\left( \sum_i |x(i)|^p\right)^{1/p}<\infty$. For $1\leq p<\infty$ this defines a norm on $\ell_p(I)$, but only a quasinorm (a $p$-norm in fact) for $0<p<1$. Similarly, $c_0(I)$ denotes the space of functions $x:I\to\R$ such that for every $\e>0$ the set $\{i\in I: |x(i)|\geq \e\}$ is finite, while $\ell_\infty(I)$ is the space of bounded functions on $I$.
As usual, we equip $c_0(I)$ and $\ell_\infty(I)$ with the norm $\|x\|_\infty=\sup_{i\in I}|x(i)|$.
If $I=\N$ we just write $\ell_p$ and $c_0$. For $J\subset I$ we often consider $\ell_p(J)$ as the subspace of elements of $\ell_p(I)$ that vanish off $J$. This will not cause any confusion.\medskip

Assume that $\mathscr J$ is a family of pairwise disjoint subsets of $I$ and that for each $J\in\mathscr J$ one has a subspace $X_J\subset \ell_p(J)\subset\ell_p(I)$. Then, if $x 1_J$ denotes the pointwise product of $x$ by the characteristic function of the set $J$,
$$\ell_p(\mathscr J, X_J)= \left\{x\in\ell_p(I): \supp x\subset \bigcup\mathscr J,  x|_J\in X_J\;\forall \,J\in\mathscr J\right\}.
$$
This is coherent with the traditional notation for the $\ell_p$-sum of families: if $(Y_k)$ is a family of spaces  indexed by $k\in K$, then $\ell_p(K,Y_k)$ is the space of families $(y_k)_{k\in K}$, with $y_k\in Y_k$, such that $\sum_{k\in K}\|y_k\|^p<\infty$. 
Indeed, due to the peculiarities of the $\ell_p$-quasinorm (if $p\neq 2$, nobody cares about $p=2$) a subspace $X$ of $\ell_p(\Gamma)$ is isometric to $\ell_p(K,Y_k)$ if and only if there is a decomposition $\mathscr J=\{J_k: k\in K\}$ and subspaces $X_k\subset \ell_p(J_k)$
such that $X=\ell_p(\mathscr J, X_k)$, where ``$=$'' means ``is''. The \emph{density character} of a quasinormed space $X$, denoted $\dens(X)$, is the minimal cardinality of a dense subset of $X$. 
 The cardinal of a set $I$ is denoted by $|I|$ and $\aleph_1$ denotes the first uncountable cardinal. Since cardinals are also sets, we will write $I,J,\Gamma$ for sets and $\aleph$ and similar symbols for cardinals mostly for psychological reasons.

\section{The case $p>1$}

Let us first present a decomposition result for subspaces of $\ell_p(\Gamma)$ when $1<p<\infty$, resulting in a new proof that every complemented subspace of $\ell_p(\Gamma)$ is isomorphic to $\ell_p(I)$ for some $I\subset\Gamma$; these results are inspired by \cite{orty} and \cite{moreplic}:

\begin{prop}\label{prop} Let  $1<p<\infty$ and let $\Gamma$ be a set. Every closed subspace of $\ell_p(\Gamma)$ has the form $\ell_p(\mathscr J, X_J)$, where $\mathscr J$ is a decomposition of $\Gamma$ into countable subsets and $X_J\subset \ell_p(J)$ for every $J\in\mathscr J$.
\end{prop}

The proof is hidden in the claim inside the proof of \cite[Lemma 2]{jz}:\medskip

\noindent\textbf{Claim.}\hspace{3pt} Let $X$ be a closed subspace of $\ell_p(\Gamma)$ for $1<p<\infty$ or $c_0(\Gamma)$. For every countable subset $I \subset \Gamma$ there is a countable $J\subset \Gamma$ containing $I$ such that $x 1_J\in X$ for every $x\in X$.\medskip

For the sake of completeness let us provide a sketch of the argument for subspaces of $\ell_p(\Gamma)$: consider the separable space $X_I= \{x1_I : x\in X\}\subset \ell_p(I)$ and obtain a separable subspace $X_1\subset X$ such that the unit ball of $(X_{1})_I =\{x1_I: x\in X_1\}$ is dense in the unit ball of $X_I$. Let $I_1\supset I$ be a countable set containing the supports of the elements of $(X_{1})_I$. Obtain in the same form a separable subspace $X_2\subset X$ such that the unit ball of $(X_{2})_I =\{x 1_I: x\in X_2\}$ is dense in the unit ball of $(X_{I_1}$ and  a countable set $I_2\supset I_1$ containing the supports of the elements of $(X_{2})_I$ and continue inductively. Set $J=\bigcup I_n$.
Given a normalized $x\in X$ pick a seminormalized sequence $(x_n)$, with $x_n\in X_n$ such that $\lim \|(x -x_n)1_{I_n}\|=0$ and therefore $\lim \|x 1_J -x_n 1_{I_n}\|=0$. Thus, taking limits in the weak topology one gets
$$\lim x 1_J - x_n = \lim x1_J - x1_{I_n} + \lim x1_{I_n} - x_n = \lim x1_{I_n} - x_n =0$$
since $x1_{I_n} - x_n$ is a bounded disjointly supported sequence, hence weakly $1$-summable in $\ell_p(\Gamma)$.\medskip

This argument of Johnson and Zippin, designed to show that every subspace of $c_0(\Gamma)$ has the form $c_0(\mathscr J, H_J)$, also works, as the authors say, ``in any space which has an extended shrinking unconditional basis" (see also Proposition \ref{prop1} below); in particular, in $\ell_p(\Gamma)$ when $p>1$ as we saw above. With this in hand we can follow the proof of \cite{moreplic} and obtain Proposition \ref{prop}:
\begin{proof}[Proof of Proposition \ref{prop}]  Apply Zorn's lemma to the set of families of pairwise disjoint, countable subsets of $\Gamma$ satisfying the previous Claim and obtain a maximal such family $\mathscr M$. By maximality, $\Gamma \setminus \bigcup \mathscr M$ must be either finite or empty; so there is no loss of generality in assuming it is empty. If we call $X_M = \{ x1_M : x\in X\}$ for $M\in\mathscr M$, then $X=\ell_p(\mathscr M, X_M)$: the only point to prove is that every $x\in X$ belongs to some
$X_M$, which is contained in the Claim.\end{proof}

After the proposition the following facts are immediate:

\begin{corollary} Let $1<p<\infty$ and let
$I$ be a set. An infinite-dimensional, closed subspace of $\ell_p(I)$ of density  character $\aleph$ contains a subspace
isomorphic to $\ell_p(\aleph)$ and complemented in $\ell_p(I)$.
\end{corollary}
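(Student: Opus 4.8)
The plan is to read the required subspace directly off the decomposition supplied by Proposition~\ref{prop} in the nonseparable case, falling back on a gliding-hump argument in the separable one. Write $X=\ell_p(\mathscr J, X_J)$ as in Proposition~\ref{prop}, and put $\mathscr J_0=\{J\in\mathscr J: X_J\neq 0\}$. Since each $X_J$ is separable (as $J$ is countable), the $\ell_p$-sum $\ell_p(\mathscr J, X_J)$ has density character $\max(\aleph_0,|\mathscr J_0|)$; because $X$ is infinite-dimensional this gives $\aleph=\max(\aleph_0,|\mathscr J_0|)$, so the cases $\aleph>\aleph_0$ and $\aleph=\aleph_0$ correspond exactly to $|\mathscr J_0|=\aleph$ and $|\mathscr J_0|\le\aleph_0$.

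Suppose first $\aleph>\aleph_0$, so $|\mathscr J_0|=\aleph$. For each $J\in\mathscr J_0$ pick a norm-one vector $x_J\in X_J\subset\ell_p(J)$. As the sets $J$ are pairwise disjoint, the normalized vectors $(x_J)_{J\in\mathscr J_0}$ are disjointly supported, so $E=\overline{\operatorname{span}}\{x_J:J\in\mathscr J_0\}$ is isometric to $\ell_p(\mathscr J_0)=\ell_p(\aleph)$ and is contained in the closed space $X$. To complement $E$ in $\ell_p(I)$, use Hahn--Banach to choose, for each $J\in\mathscr J_0$, a functional $x_J^*\in\ell_p(J)^*=\ell_q(J)$ (with $1/p+1/q=1$) satisfying $\|x_J^*\|=1=x_J^*(x_J)$, regarded as an element of $\ell_q(I)$ supported on $J$, and set $Py=\sum_{J\in\mathscr J_0}x_J^*(y)\,x_J$. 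Disjointness gives $x_J^*(x_K)=0$ for $J\neq K$ while $x_J^*(x_J)=1$, so $P$ fixes every $x_J$ and is a projection onto $E$; and since the $x_J$ are disjointly supported with $\|x_J\|_p=1$,
$$\|Py\|_p^p=\sum_{J\in\mathscr J_0}|x_J^*(y)|^p\le\sum_{J\in\mathscr J_0}\|y1_J\|_p^p\le\|y\|_p^p,$$
so $P$ is a norm-one projection. This settles the nonseparable case.

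For $\aleph=\aleph_0$ the decomposition may reduce to a single nonzero piece, so disjoint supports are not handed to us and a perturbation is unavoidable. Here $X$ is an infinite-dimensional separable subspace of $\ell_p(I)$, and I would run the usual gliding-hump construction: given a finite $F\subset I$, the map $u\mapsto u1_F$ has finite rank on $X$, so $\{u\in X: u1_F=0\}$ is infinite-dimensional; choosing inductively a norm-one $u_n$ in $\{u\in X:u1_{F_{n-1}}=0\}$ and then a finite $F_n\supset F_{n-1}$ with $\|u_n-u_n1_{F_n}\|<\varepsilon_n$ yields a normalized sequence that is a small perturbation of the genuine block basis $(u_n1_{F_n})$ (consecutively supported on $F_n\setminus F_{n-1}$). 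That block basis is isometrically equivalent to the $\ell_p$-basis and $1$-complemented in $\ell_p(I)$ exactly as above, and by the principle of small perturbations, for $\sum_n\varepsilon_n$ small enough $\overline{\operatorname{span}}(u_n)\cong\ell_p$ is likewise complemented in $\ell_p(I)$ and sits inside $X$.

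The main bulk is routine: the norm-one projection in the nonseparable case is immediate from the additivity of the $\ell_p$-(quasi)norm over disjoint supports, and the separable case is the classical fact that every infinite-dimensional subspace of $\ell_p$ contains a complemented copy of $\ell_p$. The only step requiring genuine care is the perturbation bookkeeping in the separable case — selecting the $\varepsilon_n$ small enough that both the equivalence to the $\ell_p$-basis and the complementation survive the passage from $(u_n1_{F_n})$ to $(u_n)$ — which is the one non-immediate point hiding behind the word ``immediate''.
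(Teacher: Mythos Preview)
Your argument is correct and follows the same route as the paper: decompose via Proposition~\ref{prop}, pick a norm-one vector in each nonzero block to obtain a disjointly supported family spanning a complemented copy of $\ell_p(\aleph)$, and invoke the classical separable fact when $\aleph=\aleph_0$. You supply considerably more detail than the paper --- the explicit norm-one projection via dual functionals and the gliding-hump construction --- but the skeleton is identical.
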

\begin{proof}
If $\aleph$ is countable the result is a well-known feature of $\ell_p$.  Otherwise, write the subspace in the form $\ell_p(\mathscr J, X_J)$, pick a norm one element in each $X_J$, and note that $\dens(\ell_p(\mathscr J, X_J))=|\mathscr J|=\aleph$. \end{proof}

\begin{corollary} Let $1<p<\infty$. Every complemented subspace of $\ell_p(I)$ is isomorphic to $\ell_p(\aleph)$ for some $\aleph\leq |I|$.\end{corollary}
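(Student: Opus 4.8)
The plan is to pin down the complemented subspace---call it $X$---by Pe\l czy\'nski's decomposition method, with $\ell_p(\aleph)$ playing the role of the self-reproducing space. First I would dispose of the trivial case: if $X$ is finite dimensional, then $X\cong\ell_p(\{1,\dots,n\})$ with $n\leq|I|$ and there is nothing more to do. So assume $X$ is infinite dimensional and put $\aleph=\dens(X)$; since $X$ is a subspace of $\ell_p(I)$ we have $\aleph\leq|I|$, which is precisely the bound claimed in the statement.

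The heart of the matter is to produce two complementation facts. For the first, the preceding corollary hands us a subspace of $X$ isomorphic to $\ell_p(\aleph)$ and complemented in $\ell_p(I)$; restricting a projection of $\ell_p(I)$ onto that subspace to the smaller space $X$ shows that $\ell_p(\aleph)$ is isomorphic to a complemented subspace of $X$. For the second, I would invoke Proposition~\ref{prop} to write $X=\ell_p(\mathscr J,X_J)$ with $\mathscr J$ a family of pairwise disjoint countable subsets of $I$. After discarding the indices with $X_J=0$ (which changes neither $X$ nor its density) one has $|\mathscr J|=\aleph$, so that $J^*:=\bigcup\mathscr J$ satisfies $|J^*|=\aleph\cdot\aleph_0=\aleph$ and $X\subset\ell_p(J^*)\cong\ell_p(\aleph)$. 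Restricting a projection of $\ell_p(I)$ onto $X$ to the intermediate space $\ell_p(J^*)$ then displays $X$ as a complemented subspace of $\ell_p(\aleph)$.

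Once both facts are in place the conclusion is automatic: $\ell_p(\aleph)$ is isometric to the $\ell_p$-sum of countably many copies of itself (because $|\N\times\aleph|=\aleph$), so Pe\l czy\'nski's decomposition technique applies and yields $X\cong\ell_p(\aleph)$. I expect the only genuinely delicate point to be the bookkeeping in the second complementation fact: one must check that the support $J^*$ furnished by Proposition~\ref{prop} really has cardinality exactly $\aleph$, so that $\ell_p(J^*)$ is a bona fide copy of $\ell_p(\aleph)$, and that the ambient projection restricts to a projection \emph{inside} $\ell_p(J^*)$ rather than merely inside $\ell_p(I)$. It is worth stressing that the decomposition method is being fed the self-similarity of $\ell_p(\aleph)$ and not of $X$, which is fortunate, since we have no direct handle on whether $X\cong\ell_p(X)$; having the reproducing property on the reference space alone is exactly what the technique requires.
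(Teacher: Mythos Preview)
Your proof is correct and follows essentially the same route as the paper: a complemented copy of $\ell_p(\aleph)$ inside $X$ via the previous corollary, $X$ complemented inside a copy of $\ell_p(\aleph)$, and then Pe\l czy\'nski's decomposition method with the self-similarity placed on the $\ell_p(\aleph)$ side. The paper streamlines your second step by taking $J=\{i\in I: x(i)\neq 0 \text{ for some } x\in X\}$ directly (and observing $|J|=\aleph$) rather than passing through Proposition~\ref{prop}, which sidesteps the bookkeeping you flag.
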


\begin{proof} Let $X$ be an infinite-dimensional, complemented subspace of $\ell_p(I)$. Then $X$ contains a complemented copy of $\ell_p(\aleph)$, where $\aleph=\dens(X)$. At the same time, $X$ is complemented in $\ell_p(J)$, where $J=\{i\in I: x(i)\neq 0 \text{ for some } x\in X\}$. Clearly, $\aleph= |J|$. Applying Pe\l czy\'nski's decomposition method in the form ``if $A$ is isomorphic to a complemented subspace of $B$ and $B$ is isomorphic to a complemented subspace of $A$ and $A\simeq\ell_p(\N, A)$, then $A\simeq B$'' we obtain that $X$ is isomorphic to $\ell_p(\aleph)$.
\end{proof}

\section{The case $0<p\leq 1$}

When $0<p\leq 1$, the proof above works without significant changes for ``weak*-closed'' subspaces. Here, we consider the weak* topology induced by $c_0(\Gamma)$ in $\ell_1(\Gamma)$ and so in $\ell_p(\Gamma)$ for $0<p<1$ and we call a subspace $X\subset \ell_p(\Gamma)$ ``weak*-closed'' if the weak*-closure of the unit ball of $X$ ``stays'' in $X$.

\begin{prop}\label{prop1}
Let $0<p\leq 1$.
Every weak*-closed subspace of $\ell_p(\Gamma)$ has the form $\ell_p(\mathscr J, X_J)$, where $\mathscr J$ is a decomposition of $\Gamma$ into countable subsets and $X_J\subset \ell_p(J)$ for every $J\in\mathscr J$.
\end{prop}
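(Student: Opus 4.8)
The plan is to reduce Proposition~\ref{prop1} to the weak$^*$ analogue of the Johnson--Zippin Claim and then run the Zorn's lemma argument from the proof of Proposition~\ref{prop} verbatim. Concretely, I would first establish the following statement:

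\textbf{Claim$^*$.}\hspace{3pt} Let $0<p\le1$ and let $X\subset\ell_p(\Gamma)$ be weak$^*$-closed. For every countable $I\subset\Gamma$ there is a countable $J\supset I$ such that $x1_J\in X$ for every $x\in X$.

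Granting Claim$^*$, the decomposition follows word for word as in Proposition~\ref{prop}: applying Zorn's lemma to the families of pairwise disjoint countable subsets of $\Gamma$ satisfying Claim$^*$, one obtains a maximal such family $\mathscr M$, whose union covers $\Gamma$ up to a finite (hence negligible) set, and then $X=\ell_p(\mathscr M,X_M)$ with $X_M=\{x1_M:x\in X\}$, the only nontrivial point being that each $x\in X$ lands in some $X_M$, which is exactly the content of Claim$^*$.

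For Claim$^*$ I would repeat the Johnson--Zippin construction unchanged: build an increasing chain of countable sets $I=I_0\subset I_1\subset\cdots$ and separable subspaces $X_n\subset X$ so that $I_n$ contains the supports of the elements of $X_n$ and the unit ball of $\{x1_{I_{n-1}}:x\in X_n\}$ is dense in that of $\{x1_{I_{n-1}}:x\in X\}$; set $J=\bigcup_nI_n$. Fixing a normalized $x\in X$, the density condition yields a seminormalized sequence $(x_n)$, $x_n\in X_n$, with $\|(x-x_n)1_{I_{n-1}}\|_p\to0$. Since $\supp x_n\subset I_n$, I can split
$$x1_J-x_n=\underbrace{x1_{J\setminus I_n}}_{\to0\text{ in }\ell_p}+\underbrace{(x-x_n)1_{I_{n-1}}}_{\to0\text{ in }\ell_p}+(x1_{I_n}-x_n)1_{I_n\setminus I_{n-1}},$$
where the first term is a tail of a convergent series and the second is null by construction, while the last forms a bounded sequence supported on the pairwise disjoint annuli $I_n\setminus I_{n-1}$.

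The single place where $1<p<\infty$ entered the original argument was the assertion that a bounded disjointly supported sequence is weakly null; here I replace it by the elementary observation that, since $\|\cdot\|_1\le\|\cdot\|_p$, everything in sight lives and is bounded in $\ell_1(\Gamma)=c_0(\Gamma)^*$, and a bounded disjointly supported sequence there is weak$^*$-null: for $\phi\in c_0(\Gamma)$ and $\e>0$ the set $\{i:|\phi(i)|\ge\e\}$ is finite, so it meets only finitely many of the disjoint supports, whence $\langle\phi,\cdot\rangle\to0$ along the sequence. The two $\ell_p$-null terms are then also $\ell_1$-null, hence weak$^*$-null, so $x1_J$ is the weak$^*$-limit of the bounded sequence $(x_n)\subset X$; rescaling into the unit ball, $x1_J$ lies in the weak$^*$-closure of the unit ball of $X$, which by hypothesis stays in $X$, giving $x1_J\in X$.

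The point that requires care — and the reason the hypothesis must be strengthened from \emph{closed} to \emph{weak$^*$-closed} — is precisely this last step. For $p>1$ one invokes Mazur's theorem, that a norm-closed subspace is weakly closed, so weak convergence to $x1_J$ suffices; but for $p<1$ the quasinorm is not locally convex and this implication fails, so norm-closedness alone cannot force $x1_J\in X$. The weak$^*$-closedness hypothesis is engineered to close exactly this gap, after which the argument is formally identical to the case $p>1$. I expect the only genuinely new verifications to be the weak$^*$-nullity of the disjointly supported sequence and the bookkeeping that $\ell_p$-convergence descends to weak$^*$-convergence via $\|\cdot\|_1\le\|\cdot\|_p$; both are routine.
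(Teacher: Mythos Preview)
Your proposal is correct and follows exactly the approach the paper intends: it reproduces the Johnson--Zippin claim and the Zorn argument from Proposition~\ref{prop}, replacing the weak convergence step (which used $p>1$) by weak$^*$ convergence relative to the predual $c_0(\Gamma)$, and then invokes the weak$^*$-closedness hypothesis in place of Mazur's theorem to conclude $x1_J\in X$. The paper itself does not spell this out beyond saying that the earlier proof ``works without significant changes for weak$^*$-closed subspaces''; your write-up supplies precisely those changes.
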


Proposition \ref{prop} is false for $0<p\leq 1$ and $\Gamma$ uncountable. Indeed, a decomposition $X=\ell_p(\mathscr J, X_J)$ as suggested implies that $\ell_p(\Gamma)/X$ is isometric to  $\ell_p(\mathscr J, \ell_p(J)/X_J)$ and contains a copy of $\ell_p(\mathscr J)$, which is complemented if $p=1$.
On the other hand, any $p$-Banach space with density $\aleph$ (or less) is a quotient of $\ell_p(\aleph)$. So, if $\aleph$ is uncountable and $0<p\leq 1$, the kernel of any quotient map
$Q:\ell_p(\aleph)\to Y$ fails to be isometric to an $\ell_p$-sum of separable subspaces if $Y$ is either $\ell_q(\aleph)$ for $p<q<\infty$ or $c_0(\aleph)$. If $\aleph$ is the continuum we may take $Y=\ell_\infty$ to get the same conclusion.\medskip

The conclusion in Proposition \ref{prop} is not only false for $0<p\leq 1$: it is false even in the much harder to handle isomorphic context. Let us recall that given two quasi Banach spaces $X, Y$, the equation $\Ext(X, Y)=0$ means that every exact sequence
$ 0\to Y\to Z\to X\to 0$
splits; namely, that every operator $Y\to E$ can be extended to an operator $Z\to E$ if $Z$ contains $Y$ and $Z/Y$ is isomorphic to $X$; equivalently, every operator $A\to X$ lifts to $Z$ --- see \cite[Section~2.14 and Chapter~4]{hmbst}. In the following result ${2}^\aleph$ denotes the product of $\aleph$-many copies of the group with two elements, with its Haar measure (actually probability).

\begin{prop}\label{prop:counter} Let $\aleph$ be an uncountable cardinal and let $p\in(0,1]$. The kernel $H$ in any exact sequence
$$
\xymatrix{0\ar[r]  & H \ar[r]& \ell_p(\aleph)  \ar[r] & L_p({2}^\aleph)\ar[r] & 0}$$
is not isomorphic to any $\ell_p$-sum of separable spaces.\end{prop}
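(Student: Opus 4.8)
The plan is to argue by contradiction: suppose $H$ were isomorphic to some $\ell_p$-sum $\ell_p(K, E_k)$ of separable spaces $E_k$. The strategy is to exploit the tension between two facts. On one hand, an $\ell_p$-sum of separable spaces has a rich supply of ``small'' complemented pieces and, more importantly, admits many complemented copies of $\ell_p$-sums indexed by subsets of $K$; in particular it possesses the separable complementation property and other structural features forcing its separable subspaces to sit inside separable complemented ones. On the other hand, $H$ is the kernel of a surjection onto $L_p(2^\aleph)$, and $L_p(2^\aleph)$ for uncountable $\aleph$ is a genuinely nonseparable space that is ``$\ell_p$-singular'' in a strong sense: it contains no complemented copies of $\ell_p$ and, being an $L_p(\mu)$ space for a nonseparable measure, its separable subspaces cannot be captured by separable complemented subspaces in the way an $\ell_p$-sum would demand.

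First I would record the structural consequences of the assumed decomposition $H \simeq \ell_p(K, E_k)$. The key point is that any separable piece corresponds to a countable subset $K_0 \subset K$, and $\ell_p(K_0, E_k)$ is complemented in $H$ via the natural coordinate projection; moreover these projections assemble to show that every separable subspace of $H$ is contained in a separable complemented subspace. In other words, $H$ would have the SCP. Simultaneously, the exact sequence $0 \to H \to \ell_p(\aleph) \to L_p(2^\aleph) \to 0$ expresses $L_p(2^\aleph)$ as the quotient $\ell_p(\aleph)/H$, so I want to transfer structural information from $H$ (and from $\ell_p(\aleph)$) to this quotient.

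The heart of the argument should be to derive a contradiction at the level of the quotient. Since $\ell_p(\aleph)$ has the SCP trivially (it is itself an $\ell_p$-sum of lines) and, under the contradiction hypothesis, so does $H$, I would try to show that the quotient $L_p(2^\aleph)$ must then inherit a splitting property: namely, that a suitable separable subspace of $L_p(2^\aleph)$ lifts, modulo a separable complemented piece of $H$, to a separable complemented subspace of $\ell_p(\aleph)$. By Orty\'nski's theorem $(\bigstar)$, any complemented subspace of $\ell_p(\aleph)$ is isomorphic to some $\ell_p(I)$; pushing such a complemented copy down to the quotient would produce complemented copies of $\ell_p$ (or an $\ell_p$-sum structure) inside $L_p(2^\aleph)$. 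The contradiction is then that $L_p(2^\aleph)$, for $\aleph$ uncountable, admits no such decomposition: it contains no complemented copy of $\ell_p$ because $L_p(\mu)$ spaces for $0 < p \le 1$ are antipodal to $\ell_p$ in their lattice/subspace structure, and a nonseparable $L_p$ over a ``spread-out'' measure like $2^\aleph$ resists being an $\ell_p$-sum of separable spaces.

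The main obstacle I expect is making the transfer step rigorous, i.e. converting ``$H$ has the SCP'' into a usable splitting or complementation statement about the quotient map onto $L_p(2^\aleph)$. The subtlety is that quotients do not automatically inherit the SCP, and for $0 < p < 1$ the absence of local convexity removes many of the duality and extension tools one would use when $p > 1$; one cannot invoke Hahn--Banach to produce functionals or projections, and the weak topology arguments used in the proof of Proposition \ref{prop1} are unavailable for genuinely norm-closed (rather than weak*-closed) subspaces like $H$. I would therefore expect to need a careful cardinality-and-support bookkeeping argument, in the spirit of the Johnson--Zippin Claim, showing that an $\ell_p$-sum decomposition of $H$ forces the surjection onto $L_p(2^\aleph)$ to ``localize'' on countable coordinate blocks, which is impossible because $L_p(2^\aleph)$ is not a nontrivial $\ell_p$-sum — its nonatomic measure structure means it has no nonzero band supported on a countable set of coordinates in any compatible sense.
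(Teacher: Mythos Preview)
Your proposal is not a proof but a heuristic outline, and the outline itself contains a fatal error. You aim to derive a contradiction from the hypothetical decomposition by eventually producing a complemented copy of $\ell_p$ inside $L_p(2^\aleph)$, claiming this is impossible. For $p=1$ this is simply false: every infinite-dimensional $L_1(\mu)$ contains complemented copies of $\ell_1$ (take any sequence equivalent to the unit vector basis of $\ell_1$; it spans a complemented subspace by the Hahn--Banach theorem, or use Rademachers). So even if your ``transfer step'' could be made rigorous, it would not yield a contradiction in the Banach case. More broadly, the SCP route cannot be the source of the contradiction: the paper itself later shows that for $\aleph=\aleph_1$ the kernel $H$ has the SEP and is complemented in a space with the SCP, so SCP-type properties of $H$ are \emph{not} obstructed.

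The paper's argument is entirely different and homological. From the assumed decomposition $H\simeq\ell_p(I,X_i)$ one embeds each separable $X_i$ into a copy of $\ell_p$, amalgamating to $\jmath:\ell_p(I,X_i)\to\ell_p(I,\ell_p)$. The key step is that $\Ext(L_p(2^\aleph),\ell_p(\Gamma))=0$ (Lindenstrauss' lifting for $p=1$), which allows one to extend $\jmath w$ to $u:\ell_p(\aleph)\to\ell_p(I,\ell_p)$ and obtain an induced operator $v:L_p(2^\aleph)\to\ell_p(I,\ell_p/\imath_i(X_i))$. The contradiction then comes from outside: for $p=1$, one uses that $\Ext(L_1(2^\aleph),c_0)\neq 0$ while the separable injectivity of $c_0$ would force every operator $H\to c_0$ to extend through the diagram; for $0<p<1$, Popov's theorem forces $v=0$, whence $w^{-1}u$ is a projection onto $H$ and the sequence splits, impossible since $L_p(2^\aleph)$ does not embed in $\ell_p(\aleph)$. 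None of these ingredients --- the $\Ext$ calculations, the role of $c_0$, Popov's theorem --- appear in your sketch, and they are precisely what makes the argument work.
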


\begin{proof} We make first the Banach case $p=1$.
Assume that $H$ is isomorphic to $\ell_1(I,X_i)$ with each $X_i$ separable and let $w:H\to \ell_1(I,X_i)$ be an isomorphism.
For each $k\in I$ the composition
$$
\xymatrixcolsep{3.5pc}
\xymatrix{
X_k\ar[r]^-{\text{inclusion}} & \ell_1(I,X_i) \ar[r]^-{w^{-1}} & H \ar[r]^-{\text{inclusion}} & \ell_1(\aleph)
}
$$
defines an embedding that we may call $\imath_k$. If $J_k$ denotes the union of the supports of the elements in the range of $\imath_k$, which is a countable subset of $\aleph$, then we can regard $	\imath_k$ as an embedding of $X_k$ into $\ell_1(J_k)=\ell_1$. Clearly, we have $\|\imath_k:X_k\to \ell_1\|\leq \|w^{-1}\|$, while
$\|\imath_k^{-1}\|\leq \|w\|$. All these embeddings can therefore be amalgamated to get a new, joint embedding $\jmath: \ell_1(I,X_i)\to \ell_1(I,\ell_1 )$, which yields the exact sequence
$$
\xymatrix{0\ar[r]  & \ell_1(I, X_i) \ar[r]^{\jmath}& \ell_1(I, \ell_1)  \ar[r] & \ell_1(I , \ell_1/\imath_i(X_i))\ar[r] & 0}
$$
The operator $\jmath \;w$ can be extended to an operator $u$ as in the diagram
$$\xymatrix{
0\ar[r]  & H \ar[r]\ar[d]_{\jmath\; w}& \ell_1(\aleph)\ar@{..>}[dl]^u  \ar[r] & L_1({2}^\aleph) \ar[r] & 0\\
&\ell_1(I, \ell_1)
}$$
because of the isomorphism $\ell_1(I, \ell_1) \simeq \ell_1(I\times\N)$,  Lindenstrauss' lifting, which yields
$\Ext(L_1({2}^\aleph, \ell_1(I\times \N))=0$ (see \cite[3.7.3]{hmbst}) and the delicacies of homology (take $p=1$ in \cite[Theorem 4.2.2]{hmbst}). Thus, there is a commutative diagram
\begin{equation}\label{uve}
\xymatrix{
0\ar[r]  & H \ar[r]\ar[d]^w& \ell_1(\aleph)  \ar[r] \ar[d]^u & L_1({2}^\aleph)\ar[r] \ar[d]^{v\quad\text{ (induced)}}   & 0\\
0\ar[r]  & \ell_1(I, X_i) \ar[r]& \ell_1(I, \ell_1)  \ar[r] & \ell_1(I , \ell_1/\imath_i(X_i))\ar[r] & 0
}\end{equation}

The existence of this diagram is, however, contradictory.
We all know that $\Ext( L_1(2^\aleph), c_0)\neq 0$, which means that some operator $H\to c_0$ does not admit an extension to $\ell_1(\aleph)$, see \cite[Proposition 4.5.13]{hmbst}.
But $c_0$ is separably injective with constant 2 and so every operator $t: X_k\to c_0$ has an extension to $\ell_1$ whose norm is at most $2\|t\|\|\imath_k\| \|\imath_k^{-1}\|\leq 2\|t\| \|w\| \|w^{-1}\|$, which implies that every operator
$\ell_1(I, X_i)\to c_0$ has an extension to $\ell_1(I, \ell_1)$.\medskip

For $0<p<1$ the proof contains even more drama: If we assume that an isomorphism $w:H\to \ell_p(I,X_i)$ exists, then since
$\Ext\left(L_p(\mu), \ell_p(\Gamma)\right) =0$ in the category of $p$-Banach spaces for all measures $\mu$ and sets $\Gamma$ by \cite[1.4.14 and 5.1.22]{hmbst} the same argument  yields the analogue to diagram (\ref{uve}) namely
\begin{equation*} \xymatrix{
0\ar[r]  & H \ar[r]\ar[d]^w& \ell_p(\aleph)  \ar[r] \ar[d]^u & L_p({2}^\aleph)\ar[r] \ar[d]^{v\quad\text{ (induced)}}   & 0\\
0\ar[r]  & \ell_p(I, X_i) \ar[r]& \ell_p(I, \ell_p)  \ar[r] & \ell_p(I , \ell_p/\imath_i(X_i))\ar[r] & 0
}\end{equation*}
Popov proved in \cite{popov} that no nonzero operator from $L_p(2^\aleph)$ to a separable quasi-Banach space exists. Therefore, every operator from $L_p(2^\aleph)$ to any ``sum'' of separable spaces is zero. Hence $v=0$ and $u$ must take values in $\ell_p(I, X_i)$, which implies that $w^{-1}u$ is a projection of $\ell_p(\aleph)$ onto $H$. Thus, the upper row splits, something impossible because $L_p({2}^\aleph)$ is not a subspace of $\ell_p(\aleph)$: there is no nonzero operator $L_p({2}^\aleph)\to \ell_p(\aleph)$, didn't we mention that before?
\end{proof}

\section{A comparison between the arguments of K\"othe, Jameson and Orty\'nski and those of Johnson--Zippin}

The basic idea behind the arguments of K\"othe \cite{kothe} and Ort\'ynski \cite{orty}, made explicit by Jameson in his book \cite{GJO}, is: Let $0<p\leq 1$ and let $(z_i)_{i\in I}$ be a family in $\ell_p(\Gamma)$ such that 
$ \|\sum_i \lambda_i z_i\|\geq \varepsilon \left( \sum_i |\lambda_i|^p\right)^{1/p} $ for some $\varepsilon>0$ and all $\lambda_i$. Let $(y_i)$ be another family such that $\|z_i - y_i\|\leq \rho$, and let $Z$ and $Y$ be the subspaces spanned by $(z_i)$ and $(y_i)$, respectively.
The operator $\tau: Z\to \ell_p(\Gamma)$ given by $\tau(z_i)=y_i$ has quasinorm $\|\tau\|\leq (1+(\rho/\varepsilon)^{p})^{1/p}$ since
\begin{eqnarray*}\left\|\sum \lambda_i y_i\right\|^p &\leq& \left\|\sum \lambda_i( y_i-z_i)\right\|^p  + \left\|\sum \lambda_i z_i\right\|^p\\
&\leq& \sum |\lambda_i|^p\rho^p + \left\|\sum \lambda_i z_i\right\|^p\\
&\leq& \left(\frac{\rho^p}{\varepsilon^{p}} +1\right) \left\|\sum \lambda_i z_i\right\|^p \end{eqnarray*}
and satisfies the estimate $\|{\bf I}|_Z -\tau\|<\rho/\varepsilon$. Thus, if $\rho/\varepsilon<1$, $\tau$ is an isomorphism between $Z$ and $Y$. To this setting, Orty\'nski adds a few new items:
\medskip

\noindent{\bf(A)}\quad If $Z$ is complemented in $\ell_p(\Gamma)$ by a contractive projection $P$ then $\tau$ can be extended to an automorphism as follows: write $\ell_p(\Gamma)=Z\oplus\ker P$ and define $T(z+w)=\tau(z)+ w$, where $z\in Z, w\in\ker P$. One has
$$
\|(T-{\bf I})(z+w)\|=\|\tau(z)-z\|\leq \frac{\rho}{\varepsilon}\|z\|\leq \frac{\rho}{\varepsilon}\|z+w\|,
$$
so $\|T-{\bf I}\|\leq {\rho}/{\varepsilon}<1$ and $T$ is an automorphism of $\ell_p(\Gamma)$.

\medskip

\noindent{\bf(B)}\quad Consequently, if $Z$ is complemented by a contractive projection $P$ then also $Y$ is complemented by $TPT^{-1}$; note that $\ker Q= T[\ker P]$.

\medskip

\noindent{\bf(C)}\quad If $X\subset \ell_p(\Gamma)$ has $\dens(X) <|\Gamma|$ then for every $I\subset \Gamma$ with $|I|<\dens(X)$ there is some $x \in X$ with large  ``tail" outside $I$. 
More precisely: there exists $\varepsilon>0$ such that for every $I\subset\Gamma$ with $|I|<\dens(X)$ one can find a normalized $x\in X$ such that $\| x 1_{\Gamma \setminus I}\|>\varepsilon$.

This is true because its negation (``For every $\varepsilon>0$ there exists $I\subset \Gamma$ with $|I|<\dens(X)$ such that
$\| x 1_{\Gamma \setminus I}\|\leq \varepsilon$ for every normalized $x\in X$'')
 cannot hold; otherwise, if $I_n$ is a set that works for $\varepsilon = 1/n$, picking $I = \bigcup_n I_n$ we get that for all normalized $x\in X $ and for every $n\in \N$ one has $\| x 1_{\Gamma \setminus I}\|\leq \| x 1_{\Gamma \setminus I_n}\|\leq 1/n$; since $n$ is arbitrary, this implies $x1_{\Gamma\setminus I}=0$; that is, $x1_I=x$; hence $X\subset \ell_p(I)$, which is in contradiction with $|I|=\dens(\ell_p(I))<\dens(X)$.\medskip

Thus, Orty\'nski gets his result using (C) to obtain a large family of disjoint vectors to which apply (A) and (B) to produce the copy of $\ell_p(\aleph)$ inside $X$. However, steps (A) and (B) fail when $p>1$ and thus what Johnson and Zippin need to do is to produce the copy of $\ell_p(\aleph)$ inside $X$ directly by modifying (C) as in the Claim: $x1_J\in X$ (but not equal to $x$). In fair return, no such thing can happen when $0<p\leq 1$.

\section{Enter the separable complementation property}\label{jump}

Ok, good: when $0<p\leq 1$ there are subspaces $H$ of $\ell_p(\aleph)$ not isomorphic to any $\ell_p$-sum of separable spaces; and this happens even if the quotient $\ell_p(\aleph)/H$ is a good-natured space of type $L_p(\mu)$. In particular, there are subspaces of $\ell_1(\aleph)$ not isomorphic to weak*-closed subspaces. How bad can those kernels be?

The reader is addressed to \cite{fjp,pliyo,PY} to find a rather complete picture of what is known and unknown about the SCP already defined in the Introduction. In particular, that all the spaces $L_p(\mu)$ enjoy the SCP for $p\in[1,\infty)$: half of the authors of this paper think that this easily follows from the existence of conditional expectations and that there are less clear proofs; and the other half think that some of those less clear proofs, that WCG spaces have the SCP \cite[Chapter IV, Lemma 2.4]{DGZ} for instance, are clearer. It is clear anyway that $L_1(\mu)$ spaces for finite $\mu$ are WCG since the inclusion $L_2(\mu)\to L_1(\mu)$ has dense range. Be as it may, $L_1(\mu)$ spaces for arbitrary $\mu$ have the SCP. That is why we were interested in the following result.

\begin{theorem}\label{th:SCP} Let $0<p\leq 1$. If $X$ is a $p$-Banach space with the SCP and density character $\aleph_1$, and $Q: \ell_p( \aleph) \to X$ is a quotient map then $\ker Q$ is a complemented subspace of a space with the SCP.\end{theorem}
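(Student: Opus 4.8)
The plan is to exploit that a density-$\aleph_1$ space with the SCP is governed by a single transfinite chain of length $\omega_1$ with separable increments, and to compare the twisted kernel $H=\ker Q$ with the ``split'' $\ell_p$-sum of its local kernels, which is manifestly an SCP space. First I would manufacture the chain. Using the SCP together with $\dens(X)=\aleph_1$, build a continuous increasing family $(X_\alpha)_{\alpha<\omega_1}$ of separable complemented subspaces of $X$ with dense union and bounded projections $P_\alpha\colon X\to X_\alpha$, re-applying the SCP to absorb the limit stages. By a simultaneous closing-off argument (including in the structure a bounded homogeneous selection of $Q$) I would grow a continuous increasing family of countable sets $\Gamma_\alpha\subset\Gamma$ so that $Q$ restricts to a quotient map $Q_\alpha\colon\ell_p(\Gamma_\alpha)\to X_\alpha$. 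This yields a chain of separable sub-extensions
$$0\to H_\alpha\to\ell_p(\Gamma_\alpha)\xrightarrow{Q_\alpha}X_\alpha\to0,\qquad H_\alpha=H\cap\ell_p(\Gamma_\alpha),$$
whose union is dense in the original one (indeed, since every $h\in H$ has countable support and $\omega_1$ is regular, $h$ lands in some $\ell_p(\Gamma_\alpha)$, so $H=\bigcup_\alpha H_\alpha$). The $3\times3$ lemma at successor steps, together with $\ell_p(\Gamma_{\alpha+1})/\ell_p(\Gamma_\alpha)=\ell_p(D_\alpha)$ for $D_\alpha=\Gamma_{\alpha+1}\setminus\Gamma_\alpha$, isolates the separable increment extensions
$$0\to\kappa_\alpha\to\ell_p(D_\alpha)\to Y_\alpha\to0,\qquad\kappa_\alpha=H_{\alpha+1}/H_\alpha,\ \ Y_\alpha=X_{\alpha+1}/X_\alpha.$$

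Next I would assemble the split model. Taking $\ell_p$-sums over $\alpha<\omega_1$ and using $\ell_p(\Gamma)=\ell_p\bigl(\bigsqcup_\alpha D_\alpha\bigr)$, these glue into
$$0\to W\to\ell_p(\Gamma)\xrightarrow{q}\ell_p(\omega_1,Y_\alpha)\to0,\qquad W:=\ell_p(\omega_1,\kappa_\alpha).$$
The space $W$ is an $\ell_p$-sum of separable spaces and therefore has the SCP: a separable subspace meets only countably many coordinates, hence sits inside a countable sub-sum, which is separable and complemented by the coordinate projection. It remains to realize $H$ as a complemented subspace of $W$ (or of $W$ together with a projective summand, which is harmless since $\ell_p(\Gamma)$ itself has the SCP).

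The hard part will be exactly this complementation, and it is the homological shadow of Proposition~\ref{prop:counter}: the separable sub-extensions need not split, i.e.\ there is in general no bounded linear lifting $X_\alpha\to\ell_p(\Gamma_\alpha)$, which is precisely why the naive coordinate projections $x\mapsto x1_{\Gamma_\alpha}$ fail to carry $H$ into itself. The resolution I envisage is that the twisting of $H$ and the splitting of $W$ are built from the \emph{same} separable blocks $\kappa_\alpha$, coherently indexed by the single resolution $(P_\alpha)$. Concretely, I would use the $P_\alpha$, lifted across $Q$ on each separable slice, to define comparison operators $u\colon H\to W$ and $v\colon W\to H$ whose difference $vu-\mathbf I_H$ is strictly triangular with respect to the chain; since everything is measured in the $p$-norm, I expect the triangular error to be $p$-summable across the $\omega_1$ coordinates, so that $vu$ is invertible by a Neumann-type series and $v(vu)^{-1}$ is a projection of $W$ onto a copy of $H$. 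This would place $H$ as a complemented subspace of the SCP space $W$, and the promised SEP then follows from the already-noted fact that complemented subspaces of SCP spaces have the SEP.
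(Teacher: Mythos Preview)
Your outline diverges from the paper's argument, which is fine, but the divergence happens exactly where you run out of substance. The paper does \emph{not} try to compare $H$ with an $\ell_p$-sum of the increment kernels. Instead it works with the universal $p$-linear map $\mho:X\to\cop(X)$: using the $\omega_1$-chain $(X_\alpha)$ it builds a Hamel basis $\mathscr H$ of $X$ compatible with every $X_\alpha$, and then proves directly that $\cop(X)$ has the SCP by manufacturing, for each separable $Y\subset\cop(X)$, a projection $\phi:\cop(X)\to Y'$ onto a slightly larger separable subspace $Y'$ via the universal property of $\mho$ and a careful ``separable approximation'' of $\mho$ by $p$-linear maps with finite-dimensional/separable range. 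The conclusion follows from the Schanuel-type identity $\ker Q\oplus(\cop(X)\oplus_\mho X)\simeq \cop(X)\oplus\ell_p(\aleph)$, so that $\ker Q$ sits complemented inside the SCP space $\cop(X)\oplus\ell_p(\aleph)$. No comparison maps, no Neumann series.

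Your proposal, by contrast, has a genuine gap at the only nontrivial step. You never define $u:H\to W$ or $v:W\to H$; the phrase ``the $P_\alpha$, lifted across $Q$ on each separable slice'' hides exactly the obstruction you yourself flag: a linear lift of $P_\alpha$ through $Q_\alpha$ would be a bounded linear section of $Q_\alpha$, and such sections need not exist (this is precisely the content of Proposition~\ref{prop:counter}). Any lift you can honestly write down is only $p$-linear, so $u,v$ are not operators and $vu$ is not something whose invertibility a Neumann series can certify. Even granting linear $u,v$, ``strictly triangular with respect to an $\omega_1$-chain'' gives no bound on $\|vu-\mathbf I_H\|$, and the assertion that the triangular error is ``$p$-summable across the $\omega_1$ coordinates'' is a hope, not an estimate: nothing in the construction controls how the local quasilinear defects accumulate. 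Until you can produce concrete bounded linear $u,v$ and an honest norm bound $\|vu-\mathbf I_H\|<1$, this part of the argument is missing, and with it the whole proof.
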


\begin{proof} The proof has been organized in sections, labeled with roman numbers, just for the sake of clarity.
The reader is warned that the argumentation is indirect: we will first show that a certain space (a kind of quasilinear envelope of $X$) has the SCP and then we will deduce our statement.\medskip

\noindent{\bf (I)} \, If $\omega_1$ denotes the first uncountable ordinal, one has  $X=\bigcup_{\alpha<\omega_1} E_\alpha$, where $(E_\alpha)$ is an increasing $\omega_1$-family of separable subspaces; we do not require any  ``continuity'' condition, just that $E_\alpha\subset E_\beta$ for $\alpha<\beta<\omega_1$. We can use the SCP to get a similar family of complemented subspaces that we may call $X_\alpha$. Indeed, assuming that $X_\alpha$ has been defined for $\alpha<\beta<\omega_1$, take any complemented subspace of $X$ containing both $E_\beta$ and the closure of $\bigcup_{\alpha<\beta} X_\alpha$ and call it $X_\beta$. \medskip

 \noindent{\bf (II)} \, Once we have $X=\bigcup_{\alpha<\omega_1} X_\alpha$ it is easy to construct by transfinite induction a Hamel basis $\mathscr H$ for $X$ such that for each $\alpha<\omega_1$, $\mathscr H\cap X_\alpha$ is a Hamel basis of $X_\alpha$. 
Fix that Hamel basis for the rest of the proof and reserve. Each $x\in X$ can be written as a (finite) sum
$$
x=\sum_{h\in \mathscr H}\lambda_h(x) h,
$$
with the extra property that if $x\in X_\alpha$ and $\lambda_h(x)\neq 0$ then $h\in X_\alpha$. \medskip

\noindent{\bf (III)} \, The following remark is trivial: if $Z$ is a quasi-Banach space generated by a set $S$ (in the sense that no proper closed subspace of $Z$ contains $S$) and $Z_0$ is a closed, separable subspace of $Z$, then there exists a countable $S_0\subset S$ such that $Z_0\subset \overline{\operatorname{span}}(S_0)$.\medskip

 \noindent{\bf (IV)} \, We now switch to \emph{quasilinear} mode. Let $A, B$ be $p$-normed spaces, $p\in(0,1]$. A homogeneous map $\Phi: A\to B$ is $p$-linear if it obeys an estimate
$$
\left\|\Phi\left( \sum_{k\leq n} a_k\right) - \sum_{k\leq n} \Phi(a_k)\ \right\| \leq Q
\left( \sum_{k\leq n}\| a_k\|^p\right)^{1/p}
$$
for some constant $Q\geq 0$, every $n\in\N$ and all $a_k\in A$. Roughly speaking, if $A,B$ are $p$-Banach spaces, then $p$-linear maps $A\to B$ correspond to exact sequences of $p$-Banach spaces $0\to B\to C\to A\to 0$, see \cite[Section 3.6]{hmbst} for details. The least possible constant $Q$ for which the preceding inequality holds is $Q(\Phi)$, the ``$p$-linearity constant'' of $\Phi$.   \medskip

 \noindent{\bf (V)} \, An important feature of $p$-linear maps is the following: if $A_0$ is a dense subspace of $A$ and $B$ is complete, then every $p$-linear map $\Phi: A_0\to B$ can be extended to $A$ keeping $p$-linearity. The $p$-linearity constant may vary, however; see \cite[Section 3.9]{hmbst}. \medskip

 \noindent{\bf (VI)} \, We invoke \cite[Lemma 3.9.1]{hmbst} with the purpose of, starting with a $p$-linear map $\Phi: A\to B$ and a finite dimensional subspace $F\subset A$  and  finitely many points $x_1, \dots, x_n$ in (the unit sphere of) $F$,
  obtaining a $p$-linear map $\Phi': A\to B$ such that:
\begin{itemize}
\item $\|\Phi(a)-\Phi'(a)\|\leq 2Q(\Phi)\|a\|$ for every $a\in A$; 
\item $\Phi'(x_k)=\Phi(x_k)$ for $k\leq n$;
\item $\Phi'(F)$ is finite-dimensional (it is contained in a finite-dimensional subspace of $B$).
\end{itemize}
For the sake of peace of mind of all of us (readers and authors) let us explain how $\Phi'$ is obtained from $\Phi$, following the proof of \cite[Lemma 3.9.1]{hmbst}. First, $\Phi'=\Phi$ off $F$. To define $\Phi'$ on $F$ we 
first enlarge the set $\{x_1, \dots, x_n\}$ to a $\delta$-net in the unit sphere of $F$, which we still call the same,
 where $\delta<(1-2^{-p})^{1/p}$.
After that, one writes each $x$ in the sphere of $F$ as a linear combination $x=\sum_{k\leq n} c_k x_k$ with a good control of $\sum_{k\leq n} |c_k|^p$ according to some rules to decide which linear combination must be picked and sets
$$
\Phi'(x)=\sum_{k\leq n}c_k\Phi(x_k).
$$
Finally, one extends $\Phi'$ to $F$ by homogeneity. Since one of the rules we have just mentioned is that the obvious combination is chosen if $x$ happens to be some $x_k$, it turns out that $
\Phi'$ agrees with $\Phi$ at each $x_k$. \medskip

 \noindent{\bf (VII)} \, With this tool at hand we can obtain the following ``separable version'': 
 Let  $\Phi: A\to B$ be a $p$-linear map, $S$ be  countable
 subset of $A$ and $A_\infty=\operatorname{span}(S)$. Then there exists a countable $S'\subset A_\infty$ and a $p$-linear map
$\Psi: A_\infty\to B$ such that:
\begin{itemize}
\item $\Psi$ agrees with $\Phi$ on $S\cup S'$; 
\item $\|\Psi(a)-\Phi(a)\|\leq M\|a\|$ for some $M$ and all $a\in A_\infty$;
\item  $\Psi[A_\infty]\subset \operatorname{span}\Phi(S\cup S')$. In particular $\Psi[A_\infty]$ is (contained in some) separable (subspace of $B$).
\end{itemize}
This is achieved applying (VI) inductively, and keeping in mind that ``what has been already done remains done''.

Precisely, let $(x_n)_{n\geq 1}$ be an enumeration of $S$ and
consider the chain of finite-dimensional subspaces $A_n=\operatorname{span}\{x_1,\dots,x_n\}$. Note that $A_\infty =\bigcup_n A_n$.

Now choose an auxiliary sequence $(x_k')$ so that for every $n\geq 2$ the set
$$
\{x_1, x_1',\dots,x_{k(2)}', x_2,  x_{k(2)+1}',\dots, x_{k(n-1)}' x_{n-1},
 x_{k(n-1)+1}',\dots, x_{k(n)}', x_n\}
$$
is a $\delta$-net of the sphere of $A_n$,
and put $S'=\{x_k': k\geq 1\}$.

For each $n\geq 2$ let
 $\Phi_n': A\to B$ be the map one obtains from (VI) taking $F=A_n$ and using the $\delta$-net three lines above.
 For $x\in A_\infty$, put $n(x)=\min\{n: x\in A_n\}$ and then define
$$
\Psi(x)=\begin{cases} \Phi(x) & \text{if $x\in S\cup S'$;}\\ \Phi_{n(x)}'(x) &\text{otherwise}.\end{cases}
$$

The map $\Psi:A_\infty\to B$ is $p$-linear since one actually has $\|\Phi(x)-\Psi(x)\|\leq 2Q(\Phi)\|x\|$ for every $x\in A_\infty$ and, by the very definition, $\Psi(x)\in \operatorname{span}\Phi(S\cup S')$ for every $x\in A_\infty$, so  $\Psi[A_\infty]= \operatorname{span}\Phi(S\cup S')$.\medskip

According to (V) this $\Psi$  admits a $p$-linear extension $\overline{A_\infty}\to
 \operatorname{ \overline{span}} \,\Phi(S\cup S')$ which we still denote by $\Psi$.\medskip

 \noindent{\bf (VIII)} \,We also summon from
\cite[Section 3.10]{hmbst} the construction of the ``universal $p$-linear map'' $\mho$: given a $p$-Banach space $X$ and a Hamel basis $\mathscr H$ of $X$ there is a $p$-Banach space $\cop(X)$ together with a $p$-linear map $\mho: X \to \cop(X)$ that vanishes on $\mathscr H$ and with the universal property that for every $p$-linear map $\Phi: X\to Y$ vanishing on $\mathscr H$ there is a unique linear operator $\phi: \cop(X)\to Y$ such that $\Phi = \phi \mho$. The ``points'' $\mho(x)$, with $x$ normalized in $X$, generate a dense subspace of $\cop(X)$. \medskip
The core of the proof is the following piece:

\medskip

\noindent \textbf{Claim.} If $X$ is a $p$-Banach space with the SCP and density character $\aleph_1$, then $\cop(X)$ has the SCP as well.

\medskip

\noindent\emph{Proof of the Claim.}
\,
Write $X=\bigcup_{\alpha<\omega_1} X_\alpha$ as in (I) and then fix a Hamel basis $\mathscr H$ as in (II). Observe that different Hamel bases of $X$ just lead to isometric representations of $\cop X$, so we can choose the one that suits us best.\smallskip

Let $Y$ be a separable subspace of $\cop(X)$. According to (III) we may assume that $Y$ is generated by
a sequence $(y_k)$, with $y_k=\mho(z_k)$ and $z_k$ normalized in $X$. Take $\alpha<\omega_1$ large enough so that $X_\alpha$ contains all the $z_k$, and let $(x_n)_{n\geq 1}$ be a sequence dense in the unit sphere of $X_\alpha$ and containing $(z_k)_k$ as a subsequence.\smallskip

We define a countable subset $S\subset X_\alpha$ as follows:
\begin{itemize}
\item $x_n\in S$ for every $n$;
\item if $\lambda_h(x_n)\neq 0$ for some $n$, then $h\in S$.
\end{itemize}
That is, $S$ contains the vectors $x_1,x_2,\dots$, together with the minimal subset of $\mathscr H$ whose linear combinations contain those vectors. Note that $S\subset X_\alpha$ because $\mathscr H\cap X_\alpha$ is a Hamel basis of $X_\alpha$, that the closure of $A_\infty=\operatorname{span} (S)$ in $X$ is $X_\alpha$ and that if $h\in\mathscr H$ is such that $\lambda_h(a)\neq 0$ for some $a\in A_\infty$, then $h\in S$: actually $\mathscr H\cap A_\infty$ is  a (Hamel) basis of $ A_\infty$.\smallskip

Apply (VII) in its full literality, and then (V) to get a countable $S'\subset A_\infty \subset X_\alpha$ and a $p$-linear map  $\Psi: X_\alpha \to\cop(X)$ such that:
\begin{itemize}
\item $\Psi$ agrees with $\mho$ on $S\cup S'$; in particular, $\Psi(x_n)=\mho(x_n)$ for all $n$, and $\Psi(h)=0$ at each  $h\in \mathscr H\cap A_\infty$.
\item $\Psi[S\cup S']$ spans a separable subspace of $\cop(X)$, whose closure we will call $Y'$, and $Y'$ contains $\Psi[X_\alpha]$.
\end{itemize}

We are almost there. Let $P$ be a projection of $X$ onto $X_\alpha$. Then
$\Psi\circ P $ is $p$-linear from $X$ to $Y'$. Let $L$ be the only linear map from $X$ to $Y'$ that agrees with $\Psi\circ P$ on $\mathscr H$. Then $\Phi= \Psi\circ P-L$ is $p$-linear from $X$ to $Y'$ and vanishes on $\mathscr H$, so by the universal property of $\cop(X)$ there is an operator $\phi: \cop(X)\to Y'$ such that $\phi\circ\mho = \Phi$.\smallskip

 But $L$ vanishes on $A_\infty$ because $\mathscr H\cap A_\infty$ is a basis of $A_\infty$ contained in $S'$. Hence, if $h\in \mathscr H\cap A_\infty$ we have $L(h)=\Psi(P(h))=\Psi(h)=\mho(h)=0$.\smallskip

We finally check that $\phi$ is a projection: since $Y'= \operatorname{\overline{span}}\mho[S\cup S']$ it suffices to verify that it fixes the values $\Psi(x)=\mho(x)$ for $x\in S\cup S'$, which is clear since for such an $x$, one has
$\phi(\Psi x) = \phi(\mho x) = \Psi P(x) - L(x)  = \Psi(x)$.

\hfill{\emph{End of the proof of the Claim.}}\medskip

To conclude the proof, observe the diagram
\begin{equation*}
\xymatrix{
0\ar[r]  & \ker Q \ar[r]& \ell_p(\aleph)  \ar[r] & X \ar[r] & 0\\
0\ar[r]  & \cop(X) \ar[r]& \cop(X) \oplus_{\mho} X \ar[r] & X\ar[r]\ar@{=}[u] & 0}\end{equation*}
By the projective character (the ``lifting property'') of $\ell_p(\aleph)$ and the universal property of $\mho$ it turns out that each of those sequences is a pushout of the other (cf. \cite{hmbst}, Section~3.10 and Corollary 3.10.3). Applying then the  ``diagonal principle'' \cite[Proposition 2.7.3]{hmbst} one gets
$$ \ker Q \oplus \left( \cop(X) \oplus_{\mho} X\right) \simeq \cop(X) \oplus \ell_p(\aleph).$$

Since products of (finitely many) spaces with the SCP have it and $\ell_p(\aleph)$ has the SCP, we are done. \end{proof}

It is a shameless shortcoming of our proof that it apparently does not work for $\dens(X)> \aleph_1$.  Anyway, it yields:

\begin{corollary} The kernel of any quotient map $Q: \ell_p(\aleph)\to L_1(2^{\aleph_1})$ with $p\in(0,1]$  has the SEP.\end{corollary}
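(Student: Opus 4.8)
The plan is to check that $X=L_1(2^{\aleph_1})$ satisfies the three hypotheses of Theorem \ref{th:SCP} and then to feed it into that theorem, closing with the elementary passage from the SCP to the SEP recorded in the Introduction. So this corollary is really just a direct specialization of the theorem, and the work reduces to matching hypotheses.

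First I would observe that $X$ is a $p$-Banach space for every $p\in(0,1]$: being a genuine Banach space, its norm satisfies the triangle inequality, and since $t\mapsto t^p$ is subadditive on $[0,\infty)$ when $p\leq 1$ one gets $\|x+y\|^p\leq\|x\|^p+\|y\|^p$, so the $L_1$-norm is at the same time a $p$-norm and $L_1(2^{\aleph_1})$ qualifies as a $p$-Banach space. Next, $X$ has the SCP, as all the spaces $L_1(\mu)$ do; this is exactly the fact recalled at the start of Section \ref{jump}. The only point requiring a line of computation is that $\dens(X)=\aleph_1$. For the upper bound, the Walsh characters $w_F=\prod_{i\in F}\chi_i$, indexed by the finite subsets $F$ of $\aleph_1$ (where $\chi_i$ denotes the $i$-th coordinate viewed as a $\pm1$-valued function), have dense linear span in $L_1(2^{\aleph_1})$, and there are exactly $|[\aleph_1]^{<\omega}|=\aleph_1$ of them. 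For the lower bound, the coordinate functions $(\chi_i)_{i<\aleph_1}$ are independent, so $\|\chi_i-\chi_j\|_1=1$ for $i\neq j$; they form an uncountable $1$-separated family and no dense subset can have cardinality below $\aleph_1$.

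With these three facts in hand, Theorem \ref{th:SCP} applies verbatim to any quotient map $Q:\ell_p(\aleph)\to L_1(2^{\aleph_1})$ and yields that $\ker Q$ is a complemented subspace of a space with the SCP. I would then invoke the observation already made in the Introduction, that complemented subspaces of spaces with the SCP have the SEP (given a projection onto $\ker Q$ inside the ambient SCP space, one absorbs any separable subspace of $\ker Q$ into a separable complemented subspace of the ambient space and conjugates the corresponding finite-rank-in-the-separable-sense projection back through the embedding and the projection), to conclude that $\ker Q$ has the SEP. Honestly there is no genuine obstacle: the entire content is the density computation $\dens(L_1(2^{\aleph_1}))=\aleph_1$, which is what pins the space to the $\aleph_1$-hypothesis of the theorem, together with the remark that such a quotient map exists only for $\aleph\geq\aleph_1$ — automatic here since the statement quantifies over a \emph{given} surjection.
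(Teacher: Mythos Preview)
Your proposal is correct and takes essentially the same approach as the paper: the corollary is stated without proof precisely because it is an immediate specialization of Theorem~\ref{th:SCP}, using that $L_1(2^{\aleph_1})$ has the SCP (recalled at the opening of Section~\ref{jump}) and density character $\aleph_1$, together with the observation from the Introduction that complemented subspaces of SCP spaces have the SEP. Your verification of the three hypotheses, in particular the density computation, fills in exactly the details the paper leaves implicit.
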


\section{Closing time}

As mentioned in the introduction, a wealth of results in (quasi) Banach space theory adopt the following blueprint: \emph{Let $Q:\ell_p(I)\to X$ be a quotient map, where $p\in(0,1]$. If $X$ has a certain property $\mathscr P$, then $\ker Q$ has $\mathscr P$ --- or a property closely related to $\mathscr P$.}

Theorem~\ref{th:SCP} is a modest contribution to this line of research. Results roving the opposite direction tend to be difficult: What happens to $\ker Q$ when $X$ lacks the SCP (or the SEP)? In particular,

\begin{problem}[Plichko--Yost, cf. closing remarks of \cite{PY}]
\emph{Does the kernel of a quotient map $Q:\ell_1(I)\to\ell_\infty$ have the SCP or the SEP? }
\end{problem}

Also:

\begin{problem}[Plichko--Yost, loc. cit.]
\emph{Does $\ell_1(I)$ contain a subspace without the SCP?}
\end{problem}

 Figiel, Johnson and Pe\l czy\'nski have shown in \cite{fjp} the remarkable result that if $Q:\ell_1\to c_0$ is a quotient map, then $(\ker Q)^{**}= \ker Q^{**}$ lacks the SCP in spite of being a subspace of $\ell_1^{**}$, who has the SCP because it is an $L_1(\mu)$-space in disguise. To the best of our knowledge that is the first (and sole) known example of a Banach space with the SCP that contains a subspace that lacks it.\medskip

On the other hand, the counter-example of Proposition~\ref{prop:counter} is very specific since it critically depends on the homological properties of the spaces $L_1(\mu)$ and so we cannot rule out the possibility that
the kernel of a quotient map $\ell_1(I)\to\ell_\infty$ is isomorphic to an $\ell_1$-sum of separable subspaces. Nor do we know whether the space $H$ in Proposition~\ref{prop:counter} admits a Schauder decomposition into separable subspaces --- recall that a Schauder decomposition of $X$ is a family $(X_\alpha)$ of subspaces of $X$, indexed by $\Gamma$, such that:

\begin{itemize}
\item[(a)] Every $x\in X$ can be written in a unique way as $x=\sum_\alpha x_\alpha$, with $x_\alpha\in X_\alpha$.
\item[(b)] There exists a contant $M$ such that if $x=\sum_\alpha x_\alpha$, with $x_\alpha\in X_\alpha$, and $A\subset \Gamma$ is finite, then $\|\sum_{\alpha\in A} x_\alpha\|\leq M\|x\|$.
\end{itemize}

If the series in (a) is unconditionally converging the decomposition is said to be unconditional.

\begin{problem}
\emph{Prove or disprove that every subspace of $\ell_1(I)$ that admits a  Schauder decomposition into separable subspaces is isomorphic to an $\ell_1$-sum of separable subspaces. What if the decomposition is unconditional?}
\end{problem}

Note that subspaces of the $\ell_1$-sum could be different from those of the Schauder decomposition: $\ell_1$ contains subspaces with (unconditional) bases that are not isomorphic to $\ell_1$.\medskip

The space $H$ of Proposition~\ref{prop:counter} cannot have a symmetric (uncountable) basis: Troyanski \cite[Corollary 2]{troy} showed that a space with symmetric basis of cardinal $\aleph$ that contains $\ell_1(\kappa)$ for uncountable $\kappa$ is isomorphic to $\ell_1(\aleph)$. In our case, $H$ cannot be even complemented in its bidual because otherwise the exact sequence $$ \xymatrix{0\ar[r]  & H \ar[r]& \ell_1(\aleph)  \ar[r] & L_1({2}^\aleph)\ar[r] & 0}$$ sequence splits by Lindenstrauss' lifting \cite[3.7.3]{hmbst}. Drewnowsky \cite[Theorem 1]{drew} revisits Troyanski's result above and shows that when the basis $(u_\alpha)$ of $H$ is merely unconditional then one can (only) get that a subfamily of $(u_\alpha)$ of cardinality $\kappa$ equivalent to the unit basis of $\ell_1(\kappa)$.\medskip

The only reason behind the irritating cardinality assumption in Theorem \ref{th:SCP} is to ``find'' a Hamel basis $\mathscr H$ of $X$ and a cofinal system of separable, complemented subspaces $(X_\alpha)$ such that $\mathscr H\cap X_\alpha$ is a Hamel basis for $X_\alpha$. This suggests the following half-combinatorial half-cooked question that does not even deserve a full stop: Let $X$ be a Banach space with the SCP. It is possible to find a family of uniformly complemented, separable subspaces $X_\alpha$ and a Hamel basis of $X$ such that:
\begin{itemize}
\item Every separable subspace of $X$ is contained in some $X_\alpha$;
\item $\mathscr H\cap X_\alpha$ is a Hamel basis for $X_\alpha$ for every $\alpha$?
\end{itemize}

Be as it may, the restriction is vexatious to the spirit. Therefore:

\begin{problem}
\emph{ Show that if $X$ has the SCP and $Q: \ell_p( \aleph) \to X$ is a quotient map then $\ker Q$  has SEP.}
\end{problem}

As mentioned by Figiel, Johnson and Pe\l czy\'nski in \cite{fjp}, it is not known whether the SCP is inherited by complemented subspaces. If this were true, but even if not, it could happen that the kernels considered in the previous problem have the SCP. We don't even know if that is true for $\aleph=\aleph_1$.

\begin{problem}\emph{
Show that if $X$ has the SCP and $Q: \ell_p( \aleph) \to X$ is a quotient map then $\ker Q$  has SCP.}
\end{problem}

Regarding the connections between SCP and SEP:\medskip

\begin{problem}
\emph{Is every Banach space with the SEP a complemented subspace of a space with the SCP?}
 \end{problem}

We conjecture that the answer is affirmative.
Or, as Orihuela hinted at the ``New perspectives in Banach spaces" conference at Castro Urdiales, 8--12 July 2024,

\begin{problem}
\emph{Does the SCP agree with the SEP?}
 \end{problem}

In a different direction, one may regard both the SEP and the SCP as ``separable approximation properties''. It would be interesting to know whether some traditional variant of the approximation property (meaning a condition defined by means of finite-rank operators) implies the SCP. This problem is discussed by Zolk in \cite{zolk}, explaining why the current status of \cite[Theorem 9.3]{casa} is up in the air.


\begin{thebibliography}{[99]}

\bibitem{arica} M.\,A. Ari\~{n}o, M.\,A. Canela, \emph{Complemented subspaces and
the Hahn--Banach extension property in $\ell_p$ $(0<p<1)$}, Glasgow Math. J. 28 (1986) 115--120.


\bibitem{2132}
A.~Avil\'es, F.~Cabello  S\'anchez, J.\,M.\,F.~Castillo, M.~Gonz\'alez,
Y.~Moreno, \emph{Separably injective Banach spaces}, Lecture Notes in Math. 2132, Springer, 2016.

\bibitem{hmbst} F. Cabello S\'anchez, J.\,M.\,F. Castillo, \emph{Homological methods in Banach space theory}, Cambridge Studies in Advanced Math. 203, Cambridge 2023.

\bibitem{casa} P.\,G. Casazza, \emph{Approximation Properties}, Handbook of the geometry of Banach spaces, W.B. Johnson and J. Lindenstrauss editors, vol I, (2001), 271--316.



\bibitem{DGZ}  R. Deville, G. Godefroy, V. Zizler, {\em Smoothness
and renormings in Banach spaces}, Pitman Monographs and Surveys in
Pure and Applied Mathematics 64, 1993.

\bibitem{drew} L. Drewnowski, \emph{On uncountable unconditional bases in Banach spaces}, Studia Math. 90 (1988) 191--196.



\bibitem{fjp} T. Figiel, W.\,B. Johnson, A. Pe\l czy\'nski, \emph{Some
approximation properties of Banach spaces and Banach lattices},
Israel J. Math 183 (2011) 199--232.
\bibitem{finowoj} C.~Finol, M.~W\'ojtowicz, \emph{The structure of nonseparable Banach spaces with uncountable unconditional bases}, RACSAM 99 (2005) 15--22.


\bibitem{GJO} G.\,J.\,O. Jameson, \emph{Topology and normed spaces}, Chapman and Hall, 1974.


\bibitem{jz} W.\,B. Johnson, M. Zippin, {\it Extension of operators from subspaces of $c_0(\Gamma)$ into $C(K)$
spaces}, Proc. Amer. Math. Soc. 107 (1989) 751--754.

\bibitem{kothe} G. K\"othe,  \emph{Hebbare lokalconvexe R\"aume}, Math. Ann. 165 (1966) 181--195.

\bibitem{lindreflexive} J. Lindenstrauss, \emph{On non-separable reflexive Banach spaces}, Bull. Amer. Math. Soc. 72 (1966) 967--970.

\bibitem{lindrose} J. Lindenstrauss, H.\,P. Rosenthal, \emph{Automorphisms
in $c_0, \ell_1$ and $m$}, Israel J. Math.  9 (1969) 227--239.

\bibitem{lindtzaf} J. Lindenstrauss, L. Tzafriri, \emph{Classical Banach
spaces I, sequence spaces}, Ergeb. der Math. Grenzgebiete 92, Springer, 1977.

\bibitem{moreplic} Y.~Moreno, A.~Plichko, \emph{On automorphic Banach spaces},
Israel J. Math. 169 (2009) 29--45.

\bibitem{orty} A. Orty\'nski, \emph{On complemented subspaces of $\ell^p(\Gamma)$ for $0<p\leq 1$}, Bull. Acad. Pol. Sci. 26 (1978) 31--34.

\bibitem{pliyo} A. Plichko, D. Yost, \emph{Complemented and uncomplemented subspaces of Banach spaces}, Extracta Math. 15 (2000) 335--371.

\bibitem{PY} A. Plichko, D. Yost, \emph{The Radon--Nikod\'ym property does not imply the Separable Complementation Property}, J. Funct. Anal. 180 (2001) 487--489.

\bibitem{popov} M.\,M. Popov, \emph{Codimension of subspaces of $L_p$ for $0 < p < 1$}, Funktional. Anal. i Prilozhen. 18 (2)
(1984) 94--95.

\bibitem{salinas} B. Rodr\'iguez-Salinas, \emph{On the complemented subspaces of $c_0(I)$ and $\ell_p(I)$ for $1<p<\infty$}, Atti. Sem. Mat. Fis. Univ. Modena 42 (1994) 399--402.
\bibitem{ros} H.\,P. Rosenthal, \emph{On relatively disjoint families of measures, with some applications to Banach space theory}, Studia Math. 37 (1970) 12--36.


\bibitem{troy} S. Troyanski, \emph{On non-separable Banach spaces with a symmetric basis}, Studia Math. 53 (1975) 253--263.

\bibitem{zolk}
I. Zolk, \emph{The commuting bounded
approximation property
of Banach spaces}, Dissertationes Mathematic\ae\ Universitatis Tartuensis {60}, Tartu University Press, 2010.
\end{thebibliography}
\end{document}